\theoremstyle{plain}
\newtheorem{thm}{Theorem}
\newtheorem{lemma}[thm]{Lemma}
\newtheorem{corollary}[thm]{Corollary}
\newcommand{\cid}{\stackrel{d}{\longrightarrow}}
\newcommand{\toi}{\to\infty}
\newcommand{\N}{\mathbb{N}}
\begin{document}


\title{Extremes of random variables observed in renewal times}
\maketitle

\begin{center}
\author{Bojan Basrak}\\
\address{Department of Mathematics, University of Zagreb, Croatia}\\
\email{bbasrak@math.hr}
\end{center}

\vspace{0.5cm}
\begin{center}
\author{Drago \v Spoljari\'c}\\
\address{Faculty of Mining, Geology and Petroleum Engineering, University of Zagreb, Croatia}\\
\email{drago.spoljaric@rgn.hr}
\end{center}



\begin{abstract}
We use point processes theory to describe the asymptotic distribution of all upper order statistics for observations collected at renewal times. As a corollary, we obtain limiting theorems for corresponding extremal processes.
\end{abstract}






\section{Introduction}
\label{sec_Intro}
The maximum of a random number of random variables has been studied
for decades. The basic problem is to understand 
the distribution of
\begin{equation*}
M(t) = \max_{i=1,\ldots, \tau(t)} X_i\,,
\end{equation*}
for  an iid sequence $(X_n)$ and  random variables $\tau(t),\, t\geq 0,$ which are typically modeled by a renewal process.  The earliest references are \cite{Lamperti61,Berman62,BarndorffNielsen64}, see also  \cite{SS83,Anderson87} for
extensions and applications in engineering.
More recently, \cite{MStoev2009} and \cite{PMM2009} 
studied the convergence of the process $(M(t))$ towards an appropriate extremal process.
A general treatment of extremal processes with random sample size
 can be found in \cite{SilvestrovTeugels98,SilvestrovTeugels2004}.

From practical perspective, it is frequently important to understand the distribution of all 
the extreme observations and not merely the maximum.
Thus, we aim to explain the
  limiting behaviour of all large values in the sequence $(X_n)$, which
arrive before a given time $t$. To do that, we rely on the theory of point processes.
Such an approach seems to be entirely new in this context. 
It does  not only yield more general results, but
we believe, it provides a better insight into why  previously established results actually hold.

Throughout $(\tau(t))$ represents the renewal process generated by an iid sequence of nonnegative random variables $(Y_n)$, i.e. 
\begin{equation}\label{def_renewal_process}
\tau(t) = \inf \{ k: Y_1+\cdots + Y_k >t \}\,,
\quad \mbox{ for } t\geq 0\,.
\end{equation}
Moreover, we assume that the distribution of $X_1$ belongs to 
the maximum domain of attraction (MDA for short) of one of the three extreme value distributions, denoted by $G$.
Because of the correspondence between MDA's of Fr\'echet and Weibull distributions, we discuss only Gumbel and Fr\'echet MDA's in detail (see subsection 3.3.2 in \cite{EKMikosch}).
Recall that $X_1\in\mathrm{MDA}(G)$ means that for some sequences $(a_n)$ and ($b_n$)
$$
 n P( X_1 > a_n x + b_n) \to -\log G(x)\,, 
$$ 
as $n \to \infty$, for each real $x$ such that $G(x)\in(0,1)$. This is further equivalent to
$
 {(M_n - b_n)}/{a_n}  \cid G\,,  
$
as $n\toi$, where  $M_n = \max\{ X_i: 1\leq i \leq n \}$
denotes the partial maxima of the iid sequence $(X_n)$.
Typically, one also assumes that $(\tau(t))$ is  independent from sequence of observations $(X_n)$.

The partial maximum of $(X_n)$ governed by $\tau(t)$ is defined as
\begin{equation*}
M^\tau(t) = \sup \{ X_i: i \leq \tau(t) \}\,.
\end{equation*}
If the  steps of the renewal process have finite mean,
that is, if $\mu=EY\in (0, \infty)$, for iid $X_n$'s,
 we know that
 the partial maxima governed by the renewal process behave as if
 they were observed at deterministic times. In other words
\begin{equation} \label{eq:McidG}
\frac{M^\tau (t) - b_{\lfloor t/\mu \rfloor} }{ a_{\lfloor t/\mu \rfloor}}  \cid  G \,,
\end{equation}
as $t\toi$. 
Intuitively, one could say that $M^\tau (t)$ behaves as $M_{\lfloor t/\mu \rfloor}$. Moreover, this holds
irrespective of dependence between $(\tau(t))$ and the observations.
For the renewal process with infinite mean, but with regularly varying steps, one can still determine
 the
asymptotic distribution of the maximum, see \cite{Berman62}.
In such a setting, the convergence of $(M^\tau(t))$ was shown at the level of stochastic
processes, see \cite{MStoev2009,PMM2009}. 
In the rest of the paper we show how one can 
move beyond the maxima  and
 extend those results to all upper order statistic in both finite and infinite mean case.

The paper is organised as follows:  notation and auxiliary results are introduced in section~\ref{section_prelims}. In section~\ref{section_finite_mean}, we consider the finite mean case in detail, while the problem when interarrival times have infinite mean and are independent of observations will be studied in section 4.

\section{Auxiliary point processes}\label{section_prelims}

As already mentioned in the introduction, we assume that the distribution of $X_1$ belongs to the MDA(G) where $G$ is Gumbel $(G=\Lambda)$ or Fr\'echet $(G=\Phi_\beta$, for $\beta>0)$ distribution. In particular, there exist functions $a(t)$ and $b(t)$ such that 
\begin{equation}\label{MDA_condition}
 t P( X_1 > a(t) x + b(t)) \to -\log G(x)\,, 
\end{equation}
as $t\toi$ (cf. \cite{ResEVRVPP}).
Throughout the article  we consider  point processes of the form
\begin{equation}\label{def_N_t_general}
  N_t = \sum_{i\geq 1} \delta_{\left({i}/{g(t)}, X_{t,i}\right)}\,,
\end{equation}
for a nondecreasing function $g:(0,\infty)\to (0,\infty)$ tending to $+\infty$ as $x
\toi$, with
\begin{equation}\label{def_Xti}
X_{t,i} = \frac{X_i - b(g(t))}{a(g(t))}\,,
\end{equation}
where scaling and centering functions $a(t)$ and $b(t)$ are given in $(\ref{MDA_condition})$.

In the sequel we will allow the function $g$ to depend on the tail of the step size distribution.
However, for  iid observations $(X_n)$, it is well known that
$X_1\in\mathrm{MDA}(G)$ is both necessary and sufficient for weak convergence of 
$(N_t)$.
Moreover,  the limiting point process, $N$  say, is a Poisson random measure (PRM) 
with mean measure $\lambda \times \mu_G$ ($\mathrm{PRM}(\lambda\times \mu_G)$ for short), where $\lambda$ denotes the Lebesgue measure and $\mu_G$ represents the measure induced by the nondecreasing function $\log G$.  Observe that $N_t$ take value in the space of Radon point measures
 $M_p([0,\infty)\times \mathbb{E})$, with $\mathbb{E}$ depending on $G$. For instance, in the Gumbel MDA, with $G=\Lambda$,  $\mathbb{E}=(-\infty,\infty]$ and the measure $\mu_G$ 
 satisfies $\mu_G(x,\infty]=e^{-x}$ for $x\in\mathbb{R}$. In the Fr\'echet MDA, with $G=\Phi_\alpha$,  $\mathbb{E}=(0,\infty]$ and  the measure $\mu_G$ 
 satisfies $\mu_G(x,+\infty]=x^{-\alpha}$ for every $x>0$. For
 the Weibull case and the definition of vague topology on the space of  point measures
 $M_p([0,\infty)\times \mathbb{E})$ we refer to~\cite{ResEVRVPP}. 

Since the distribution of point processes $N_t$ contains the information about all upper order statistics in the sequence $(X_n)$,
our plan is
 to show the convergence of point processes $N_t$  restricted to time intervals determined by a renewal process.  An  application of the continuous mapping theorem together with 
  Proposition 3.13 in \cite{ResEVRVPP} yields 
the following simple result, which plays an important role in 
the sequel.

\begin{lemma}\label{basiclemma}
Assume that $N,(N_t)_{t \geq 0}$ are point processes with values in $M_p([0,\infty)\times\mathbb{E})$, for a measurable subset $\mathbb{E}$ of $ \mathbb{R}^d$. Assume further that $Z,(Z_t)_{t \geq 0}$  are $\mathbb{R}_{+}$-valued random variables. If $P(N(\{Z\}\times \mathbb{E})>0) = 0$ and
\begin{equation}\label{joint_cvg}
(N_t,Z_t)\stackrel{d}{\longrightarrow}(N,Z)\,,
\end{equation}
as $t\toi$, 
then
\begin{equation*}
N_t\Big|_{[0,Z_t]\times\mathbb{E}} \cid N\Big|_{[0,Z]\times\mathbb{E}}\,,
\end{equation*}
as $t\toi$.
\end{lemma}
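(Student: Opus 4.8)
The plan is to deduce the conclusion from the joint convergence $(N_t,Z_t)\cid(N,Z)$ by exhibiting a map that sends the pair (point measure, time) to the restricted point measure, and showing that this map is continuous almost everywhere with respect to the law of $(N,Z)$; the result then follows from the continuous mapping theorem. Concretely, define $\Psi\colon M_p([0,\infty)\times\mathbb{E})\times\mathbb{R}_+\to M_p([0,\infty)\times\mathbb{E})$ by $\Psi(m,z)=m|_{[0,z]\times\mathbb{E}}$. The claim \eqref{joint_cvg} together with continuity of $\Psi$ on a set of full $(N,Z)$-measure gives $\Psi(N_t,Z_t)\cid\Psi(N,Z)$, which is exactly the assertion.

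First I would make precise the set on which $\Psi$ is continuous. Since convergence in $M_p$ is vague convergence, I would use the characterization (Proposition 3.13 in \cite{ResEVRVPP}): $m_n\civ m$ in $M_p$ iff $m_n(f)\to m(f)$ for all continuous compactly supported $f\geq 0$, equivalently $m_n(B)\to m(B)$ for all relatively compact Borel $B$ with $m(\partial B)=0$. Suppose $(m_n,z_n)\to(m,z)$ with $m_n\civ m$ and $z_n\to z$ in $\mathbb{R}_+$. I want $\Psi(m_n,z_n)\civ\Psi(m,z)$. The key requirement on the limit point $(m,z)$ is that $m(\{z\}\times\mathbb{E})=0$, i.e. $m$ puts no mass on the vertical line over $z$; this is precisely where the hypothesis $P(N(\{Z\}\times\mathbb{E})>0)=0$ enters, since it says that $(N,Z)$ lands in the good set almost surely.

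For the continuity argument itself, fix a continuous $f\geq 0$ with compact support $K\subset[0,\infty)\times\mathbb{E}$; I must show $\int f\,d\Psi(m_n,z_n)=\int_{[0,z_n]\times\mathbb{E}}f\,dm_n\to\int_{[0,z]\times\mathbb{E}}f\,dm$. Write this as $m_n(f\mathbf{1}_{[0,z_n]\times\mathbb{E}})$. Since $m(\{z\}\times\mathbb{E})=0$, the number of atoms of $m$ inside $K$ lying on the boundary line $\{z\}\times\mathbb{E}$ is zero; because $m_n\civ m$, on the compact $K$ the atoms of $m_n$ converge (in location and multiplicity) to those of $m$, so for $n$ large no atom of $m_n$ sits exactly at height $z$, and the atoms straddling the threshold can be controlled: those with first coordinate in $[0,z)$ eventually count in for $m_n$ once $z_n$ is close enough, and those with first coordinate $>z$ eventually do not. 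Splitting $K$ into the slab $[0,z-\varepsilon]\times\mathbb{E}$, the thin slab $(z-\varepsilon,z+\varepsilon)\times\mathbb{E}$, and $[z+\varepsilon,\infty)\times\mathbb{E}$, using that $m$ has no mass on $\{z\}\times\mathbb{E}$ to make the middle slab's contribution small (after sending $n\to\infty$ and then $\varepsilon\to0$), and using $z_n\to z$ to align the integration region, yields the claimed convergence. Hence $\Psi$ is continuous at every $(m,z)$ with $m(\{z\}\times\mathbb{E})=0$.

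The main obstacle, and the only genuinely delicate point, is handling atoms of the limit $m$ that lie near but not on the threshold line: vague convergence only controls $m_n$ on relatively compact sets whose boundary is $m$-null, so one must choose the cutting levels $z\pm\varepsilon$ to avoid the (at most countably many) heights at which $m$ has an atom, ensuring $m(\partial([0,z\pm\varepsilon]\times\mathbb{E}\cap K))=0$, and then interchange the limits in $n$ and $\varepsilon$ correctly. Once that bookkeeping is done, applying the continuous mapping theorem to the $M_p$-valued (metrizable, separable) setting — so that weak convergence plus a.s.\ continuity of $\Psi$ at the limit transfers — closes the proof.
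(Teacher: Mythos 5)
Your proof is correct and follows exactly the route the paper itself indicates: the paper proves this lemma only by remarking that it follows from the continuous mapping theorem together with Proposition 3.13 of Resnick, and your argument is precisely a careful implementation of that — showing the restriction map $\Psi(m,z)=m|_{[0,z]\times\mathbb{E}}$ is vaguely continuous at every pair with $m(\{z\}\times\mathbb{E})=0$ via the atom-matching characterization of vague convergence, then invoking a.s.\ continuity under the law of $(N,Z)$. The bookkeeping you flag (choosing cut levels $z\pm\varepsilon$, or equivalently a compact $K\supset\supp f$, with $m$-null boundary) is exactly the right point to be careful about, and your handling of it is sound.
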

By $m\vert_A$ above we denote the restriction of a point 
measure $m$ on a set $A$, i.e. $m\vert_A(B) = m(A\cap B)$.

Clearly, the joint convergence in (\ref{joint_cvg}) follows at once from $N_t\stackrel{d}{\rightarrow}N$ and $Z_t\stackrel{d}{\rightarrow}Z$ if the limit $Z$ is a constant or if $N_t$ and $Z_t$ are independent. 
In the sequel we consider $Z_t$ as a passage time of a renewal process, that is $Z_t$ will be determined by the suitably normalised random variable $\tau(t)$.
In such a setting, the
treatment of the joint convergence in (\ref{joint_cvg}) depends on the mean of interarrival times. In the finite mean case the convergence follows easily, while in the infinite mean case things get more complicated. However, if the  steps $(Y_n)$  are regularly varying with index $\alpha\in (0,1)$ (cf.~\cite{Anderson87,MS2004,PMM2009})
and independent of the observations, the limiting distribution of the upper order statistics can be
determined precisely as we show below.

\section{Observations governed by a finite mean renewal process}\label{section_finite_mean}

The finite mean case is well understood, still Theorem~\ref{thm_const} below
represents a generalization of the previously published results to the
point processes setting, which allows one  to describe the joint limiting distribution of all the upper order statistics. 

 Recall that $(\tau(t))$ is the renewal process generated by an iid sequence of nonnegative random variables $(Y_n)$. In this section we assume that  $\mu=EY_1 <\infty$. By the strong law of large numbers (SLLN)  it follows that for every $c\geq 0$,
\begin{equation}\label{eq:SLLNCP}
\frac{\mu\,\tau(tc)}{t}\stackrel{a.s.}{\longrightarrow}c\,,
\end{equation}
as $t\toi$ (see \cite{GutSRW}).  In this setting,
the normalizing function $g$ in the definition of point process $N_t$ (see (\ref{def_N_t_general})) is simply $g(t)=t/\mu$, that is  we set
\begin{equation*}
  N_t = \sum_{i\geq 1} \delta_{(\frac{i}{t/\mu}, X_{t,i})}\,,
\end{equation*}
with $X_{t,i}$ defined as in (\ref{def_Xti}).
Applying Lemma~\ref{basiclemma} to $N_t$ and $Z_t = \mu \tau(tc)/t$, one can show the following theorem which describes asymptotic behaviour of all the upper order statistics in the sequence $(X_n)$ until the passage time $\tau(t)$. In particular, 
the statement (\ref{eq:McidG}) is its immediate consequence.

\begin{thm}\label{thm_const}
Suppose $(X_n)$ is an iid sequence such that $X_1\in\mathrm{MDA}(G)$. If $\mu=EY_1<\infty$,
then, for every $c\geq 0$,
\begin{equation*}
N_t\Big|_{\left[0,\frac{\mu\,\tau(tc)}{t}\right]\times\mathbb{E}} \cid  N\Big|_{\left[0,c\right]\times\mathbb{E}}\,,
\end{equation*}
as $t\toi$, where $N$ is $\mathrm{PRM}(\lambda\times\mu_G)$ and  the 
measure $\mu_G$ and set $\mathbb{E}$ correspond to $G$ as described in section 2.
\end{thm}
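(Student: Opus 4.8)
The plan is to apply Lemma~\ref{basiclemma} with the point processes $N_t$ as defined in this section, the passage-time variables $Z_t = \mu\,\tau(tc)/t$, the deterministic limit $Z=c$, and the limiting point process $N$ equal to $\mathrm{PRM}(\lambda\times\mu_G)$. For this it suffices to verify the three hypotheses of the lemma: that $N_t\cid N$, that $Z_t\cid c$, and that $P(N(\{c\}\times\mathbb{E})>0)=0$. The joint convergence $(N_t,Z_t)\cid(N,c)$ then comes for free from the remark immediately following the lemma, since the limit $Z=c$ is a constant; this is precisely why the finite-mean case is the easy one and the independence of $(\tau(t))$ and $(X_n)$ is not needed here.

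First I would record the marginal convergence $N_t\cid N$. With $g(t)=t/\mu$, the points of $N_t$ are $\left(i/(t/\mu),\,(X_i-b(t/\mu))/a(t/\mu)\right)$; since $X_1\in\mathrm{MDA}(G)$ and $t/\mu\toi$, the classical point-process result (Proposition~3.21 in \cite{ResEVRVPP}, the fact quoted in section~\ref{section_prelims}) gives $N_t\cid N$ in $M_p([0,\infty)\times\mathbb{E})$, where $N$ is $\mathrm{PRM}(\lambda\times\mu_G)$. Here one uses $\lfloor t/\mu\rfloor \sim t/\mu$ so that replacing $g(t)$ by the integer $\lfloor t/\mu\rfloor$ is asymptotically harmless. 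Second, the convergence $Z_t = \mu\,\tau(tc)/t \cas c$, hence $\cid c$, is exactly \eqref{eq:SLLNCP}, a consequence of the SLLN for renewal processes. Third, because $N$ is $\mathrm{PRM}(\lambda\times\mu_G)$ and $\lambda\times\mu_G$ assigns no mass to the fibre $\{c\}\times\mathbb{E}$ (the $\lambda$-component of a single point is zero), $N(\{c\}\times\mathbb{E})$ is Poisson with mean $0$, so $P(N(\{c\}\times\mathbb{E})>0)=0$.

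With all three hypotheses in hand, Lemma~\ref{basiclemma} yields
\begin{equation*}
N_t\Big|_{[0,Z_t]\times\mathbb{E}} \cid N\Big|_{[0,c]\times\mathbb{E}}\,,
\end{equation*}
which is the assertion of the theorem once we substitute $Z_t=\mu\,\tau(tc)/t$. Finally, I would indicate how \eqref{eq:McidG} follows: apply the convergence with $c=1$ together with the continuous mapping theorem to the functional sending a point measure $m$ on $[0,1]\times\mathbb{E}$ to $\sup\{x:\ m([0,1]\times(x,\infty])>0\}$ (with the usual convention on the empty set), which is a.s.\ continuous at $N|_{[0,1]\times\mathbb{E}}$; this functional applied to $N_t|_{[0,\mu\tau(t)/t]\times\mathbb{E}}$ returns $(M^\tau(t)-b_{\lfloor t/\mu\rfloor})/a_{\lfloor t/\mu\rfloor}$ and applied to $N|_{[0,1]\times\mathbb{E}}$ returns a random variable with law $G$.

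The only genuinely delicate points are bookkeeping ones rather than conceptual obstacles: making sure the restriction map $(m,s)\mapsto m|_{[0,s]\times\mathbb{E}}$ is handled correctly (this is the content of Lemma~\ref{basiclemma} and ultimately of Proposition~3.13 in \cite{ResEVRVPP}), and confirming that the substitution $g(t)=t/\mu$ versus the integer-indexed normalizations $a_{\lfloor t/\mu\rfloor},\,b_{\lfloor t/\mu\rfloor}$ causes no trouble — this follows from the standard regular-variation / convergence-to-types arguments and the fact that $a(\cdot),b(\cdot)$ can be chosen so that $a(t/\mu)\sim a_{\lfloor t/\mu\rfloor}$ and $(b(t/\mu)-b_{\lfloor t/\mu\rfloor})/a_{\lfloor t/\mu\rfloor}\to 0$. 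I expect the main (mild) obstacle to be phrasing the a.s.\ continuity of the boundary functional at $N$ cleanly, since one must exclude the null event that $N$ charges the vertical line $\{c\}\times\mathbb{E}$ — but that is exactly the hypothesis $P(N(\{c\}\times\mathbb{E})>0)=0$ already built into Lemma~\ref{basiclemma}.
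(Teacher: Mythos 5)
Your proposal is correct and follows essentially the same route as the paper: establish $N_t\cid N$ from $X_1\in\mathrm{MDA}(G)$, get $\mu\,\tau(tc)/t\to c$ from the SLLN, deduce the joint convergence via Slutsky's theorem (trivial since the limit is a constant), and conclude with Lemma~\ref{basiclemma}. Your explicit verification that $P(N(\{c\}\times\mathbb{E})>0)=0$ and the remarks on the $\lfloor t/\mu\rfloor$ bookkeeping are details the paper leaves implicit, but they do not change the argument.
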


\begin{proof}
The assumption $X_1\in\mathrm{MDA}(G)$ is equivalent to $N_t\stackrel{d}{\longrightarrow}N$ as $t\toi$, where $N$ is $\mathrm{PRM}(\lambda\times\mu_G)$. Due to the assumption $\mu<\infty$, using (\ref{eq:SLLNCP}) and Slutsky's theorem (see Theorem 3.9 in \cite{BillingsleyCPM}), one can conclude that
\begin{equation*}
\left(N_t,\frac{\mu\,\tau(tc)}{t}\right)\cid (N,c)\,, 
\end{equation*}
as $t\toi$, for every $c\geq 0$. The convergence takes place in $M_p([0,\infty)\times\mathbb{E})\times \mathbb{R}_+$ endowed with the product topology i.e. topology of vague convergence of point measures and standard topology on $\mathbb{R}_+$ generated by the open intervals. At the end, an application of Lemma~\ref{basiclemma} to $Z_t = \mu\tau(tc)/t$ yields the desired result.
\end{proof}
Note that Theorem~\ref{thm_const} makes no restriction on dependence between the sequences $(X_n)$ and $(Y_n)$. One consequence of Theorem~\ref{thm_const} is the joint limiting distribution of all upper order statistics. As an example we derive the joint distribution for the largest and the second largest observation. Let $M_k^{\tau}(t)$, for $k\in\N$  and $t > 0$, represents the $k$-th upper order statistics in a sample of observations $\{X_1\ldots,X_{\tau(t)}\}$. Clearly, for real $x_1>x_2$ and $c>0$, it holds
\begin{eqnarray*}
\lefteqn{P(M_{1}^\tau(tc)\leq a(t/\mu)x_1+b(t/\mu),M_{2}^\tau(tc)\leq a(t/\mu)x_2+b(t/\mu))}\\
&=&P\left(N_t\left(\left[0,\frac{\tau(tc)}{t/\mu}\right]\times(x_1,\infty]\right)=0,N_t\left(\left[0,\frac{\tau(tc)}{t/\mu}\right]\times(x_2,\infty]\right)\leq 1\right)\,.
\end{eqnarray*}
Therefore, by Theorem~\ref{thm_const}, as $t\toi$, we get
\begin{eqnarray*}
\lefteqn{P(M_{1}^\tau(tc)\leq a(t/\mu)x_1+b(t/\mu),M_{2}^\tau(tc)\leq a(t/\mu)x_2+b(t/\mu))}\\
&\to& P\Big(N([0,c]\times(x_2,\infty])=0\Big)\\
&& +P\Big(N([0,c]\times(x_1,\infty])=0,N([0,c]\times(x_2,x_1])=1\Big)\\
&=& e^{-c\mu(x_2,\infty]}+c\mu(x_2,x_1]e^{-c\mu(x_2,\infty]}\,.
\end{eqnarray*}

Another direct consequence of Theorem~\ref{thm_const} and \eqref{eq:SLLNCP} is the  functional limit theorem for corresponding extremal processes.
A partial results in this direction appears in \cite{PMM2009} where only the convergence of the one--dimensional distributions is proved.
For $t>0$, consider the random time changed extremal process
\begin{equation}\label{def_extremal_proc_finite_mean}
\xi_t(s)=\frac{M^{\tau}(ts)-b(t/\mu)}{a(t/\mu)}\,,
\quad s>0\,.
\end{equation}
Recall that an extremal process (see \cite{ResEVRVPP}) generated by 
an extreme value distribution function $G$ ($G$-extremal process, for short) is a continuous time stochastic process $\{\xi(s),s>0\}$ with finite dimensional distributions $G_{s_1,\ldots,s_k}$ 
satisfying
\[  G_{s_1,\ldots,s_k}(x_1,\ldots,x_k)=G^{s_1}(\wedge_{i=1}^k x_i)G^{s_2-s_1}(\wedge_{i=2}^k x_i)\cdots G^{s_k-s_{k-1}}(x_k)  \,, \]
 for all choices of $k\geq 1$, $0<s_1<\cdots < s_k$, $x_i\in\mathbb{R}$, $i=1,\ldots,k$.
In the proofs of Corollaries~\ref{corollary_finite_mean} and~\ref{corollary_infinite_mean} we will use the functional which maps point measures to the space of c\`adl\`ag functions and is given by
\begin{equation}\label{T_1 functional}
T_1 \left( \sum_{k}\delta_{(\tau_k,y_k)}\right)(t)=\bigvee_{\tau_k \leq t}y_k\,.
\end{equation}
Nice thing is that $T_1$ is a.s. continuous with respect to the distribution of $N$ in Theorem~\ref{thm_const} (see~\cite[p.214]{ResEVRVPP} for more details).
 
\begin{corollary}\label{corollary_finite_mean}
Assume that the assumptions of Theorem~\ref{thm_const} hold. Then, as $t\toi$, we have
\begin{equation*}
(\xi_t(s))_{s>0}\cid (\xi(s))_{s>0} \,,
\end{equation*}
in $D((0,\infty),\mathbb{R})$ with $J_1$ topology, where $(\xi_t(s))_{s>0}$ is defined in (\ref{def_extremal_proc_finite_mean}) and $(\xi(s))_{s>0}$ is $G$-extremal process. 
\end{corollary}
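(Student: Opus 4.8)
The plan is to exhibit $\xi_t$ as the image of the point process $N_t$ under the functional $T_1$ precomposed with a random time change driven by the renewal process, and then to pass to the limit by the continuous mapping theorem, in the same spirit as Theorem~\ref{thm_const}.

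First I would rewrite $\xi_t$ in terms of $T_1$. Since $\tau(ts)\in\N$, the event $\{i\le\tau(ts)\}$ coincides with $\{i/(t/\mu)\le \mu\tau(ts)/t\}$, and therefore, recalling $X_{t,i}=(X_i-b(t/\mu))/a(t/\mu)$,
\[
\xi_t(s)=\sup\{X_{t,i}:i\le\tau(ts)\}=T_1(N_t)\Bigl(\tfrac{\mu\,\tau(ts)}{t}\Bigr)=\bigl(T_1(N_t)\circ A_t\bigr)(s),\qquad A_t(s):=\tfrac{\mu\,\tau(ts)}{t}.
\]
Now $X_1\in\mathrm{MDA}(G)$ is equivalent to $N_t\cid N$ with $N$ being $\mathrm{PRM}(\lambda\times\mu_G)$, and since $T_1$ is a.s.\ continuous with respect to the law of $N$, the continuous mapping theorem gives $T_1(N_t)\cid T_1(N)$ in $D((0,\infty),\R)$ with the $J_1$ topology; by \cite[p.~214]{ResEVRVPP} the limit $T_1(N)$ is precisely the $G$-extremal process $\xi$.

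Next I would control the time change. By \eqref{eq:SLLNCP} we have $A_t(s)\cas s$ for every fixed $s\ge0$; since $s\mapsto A_t(s)$ is nondecreasing and the limiting map $s\mapsto s$ is continuous, the standard monotone-convergence-to-a-continuous-limit argument upgrades this to $\sup_{s\in[\varepsilon,K]}|A_t(s)-s|\cas0$ for all $0<\varepsilon<K<\infty$, that is, $A_t\to\mathrm{id}$, a.s., in $D((0,\infty),\R)$. Because the limit $\mathrm{id}$ is non-random, this combines with $N_t\cid N$ into the joint convergence $(N_t,A_t)\cid(N,\mathrm{id})$ in $M_p([0,\infty)\times\mathbb{E})\times D((0,\infty),\R)$, by exactly the reasoning recorded in the remark following Lemma~\ref{basiclemma}; in particular no independence between $(X_n)$ and $(Y_n)$ is needed.

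Finally I would apply the continuous mapping theorem to the map $\Psi:(m,\alpha)\mapsto T_1(m)\circ\alpha$. The key point — and the one I expect to be the main obstacle — is that although composition is not jointly continuous on $D\times D$ in the $J_1$ topology, it is continuous at every pair $(x,\alpha)$ in which $\alpha$ is continuous and strictly increasing (cf.\ the composition results underlying the extremal-process limit theory in \cite{ResEVRVPP}); here $\alpha=\mathrm{id}$ qualifies, and $T_1$ is a.s.\ continuous at $N$, so $\Psi$ is a.s.\ continuous at $(N,\mathrm{id})$. Hence $\xi_t=\Psi(N_t,A_t)\cid\Psi(N,\mathrm{id})=T_1(N)=\xi$ in $D((0,\infty),\R)$ with $J_1$, which is the claim. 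One should also bear in mind that convergence in $D((0,\infty),\R)$ is to be read as convergence of restrictions to compact subintervals $[\varepsilon,K]$, so the arguments above apply on each such interval; everything else is either quoted from Theorem~\ref{thm_const} and the preceding discussion, or is classical renewal theory.
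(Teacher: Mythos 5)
Your argument is correct and follows essentially the same route as the paper: a.s.\ local uniform convergence of the normalized renewal time change to the identity, Slutsky-type joint convergence with $N_t\cid N$, application of the a.s.\ continuous functional $T_1$, and then composition with the time change. The only difference is cosmetic: where you invoke the classical continuity of the composition map $(x,\alpha)\mapsto x\circ\alpha$ at pairs with $\alpha$ continuous and strictly increasing (the random time-change theorem of Billingsley/Whitt), the paper instead cites Theorem~4 of Silvestrov and Teugels, which plays exactly the same role.
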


\begin{proof}
Observe that by Theorem 2.15 and Proposition 1.17 in \cite[Ch. VI]{JacodLTSP}, from (\ref{eq:SLLNCP}), we get  
$
\left(\mu\,\tau(tc)/t\right)_{c>0}\stackrel{a.s.}{\longrightarrow}(c)_{c>0}\,,
$
as $t\toi$, in the local uniform topology. Therefore, under the assumptions of Theorem~\ref{thm_const}, by Slutsky's theorem, we obtain the following joint convergence
\begin{equation*}
\left(N_t,\left(\frac{\mu\,\tau(tc)}{t}\right)_{c>0}\right)\cid \left(N,(c)_{c>0}\right)\,,
\end{equation*}
as $t \toi$.
This convergence takes place in the space $M_p([0,\infty)\times\mathbb{E})\times D(0,\infty)$ endowed with the product topology of vague and local uniform topology. From here, an application of functional $T_1$ given by~\eqref{T_1 functional} to the first coordinate and then Theorem 4 from~\cite{SilvestrovTeugels98} yields the statement.
\end{proof}

\section{Observations governed by an infinite mean renewal process independent of observations}\label{section_infinite_mean}

Throughout this section, we suppose that the renewal steps $Y$ have regularly varying distribution of  infinite mean with index $\alpha\in(0,1)$. In such a case, it is well known (see \cite{FellerVol2}) that there exists a strictly positive sequence $(d_n)$ such that
\[
d_n^{-1}(Y_1+\cdots +Y_n)\cid S_\alpha\,,
\]
where random variable $S_\alpha$ has the stable law with the index $\alpha$, scale parameter $\sigma=1$, skewness parameter $\beta=1$ and  shift parameter $\mu=0$. In particular, $S_\alpha$ is strictly positive a.s. The sequence $(d_n)$ can be chosen such that 
\begin{equation}\label{def_d(t)}
n(1-F_Y(d_n))\to 1\,,
\end{equation}
as $n\toi$, where $F_Y$ denotes cdf of $Y_1$. 
If we denote $d(t)=d_{\lfloor t\rfloor}$,  for $t \geq 0$,  with $d_0=0$ and 
$
 T(t)=\sum_{i=1}^{\lfloor t\rfloor}Y_i \,,
$
 then the function $d$ is regularly varying with index $1/\alpha$ and 
\begin{equation}\label{def_S_alpha}
 \left(\frac{T(tc)}{d(t)}\right)_{c\geq 0} \cid (S_\alpha(c))_{c\geq 0}\,,
\end{equation}
as $t\toi$, in a space of c\`adl\`ag functions $D[0,\infty)$ endowed with Skorohod $J_1$ topology (see \cite{Skorohod57} or \cite{ResHTP}). The limiting process $(S_\alpha(c))_{c \geq 0}$ is an  $\alpha$--stable process with  
strictly increasing sample paths.

Recall that for a function $z\in D([0,\infty),[0,\infty))$ the right continuous generalised inverse is defined by the relation
\begin{equation*}
z^{\leftarrow}(u)=\inf\{s\in[0,\infty):z(s)>u\} \,, \quad u \geq 0\,.
\end{equation*}
 According to \cite{Seneta76},  there exists a function $\widetilde{d}$ which is an asymptotic inverse of $d$, that is 
\begin{equation}\label{d_asympt_eq}
d(\widetilde{d}(t))\sim \widetilde{d}(d(t))\sim t\,,
\end{equation}
as $t\toi$. Moreover, $\widetilde{d}$ is regularly varying function with index $\alpha$. From (\ref{def_d(t)}) one can show
$
d(t)\sim\left(1/(1-F_Y)\right)^{\leftarrow}(t) \,, 
$
therefore
\begin{equation}\label{def_d_tilde}
\widetilde{d}(t)\sim\frac{1}{1-F_Y}(t) \,.
\end{equation}

Denote by
\[
W_\alpha(c)=\inf\{x:S_\alpha(x)>c\}=S_\alpha^{\leftarrow}(c)\,,
\quad c\geq 0\,,
\]
the first hitting-time process of the process $(S_\alpha(t))_{t\geq 0}$.
Now, Theorem 7.2 in \cite{Whitt80} (cf. \cite[p. 266]{ResHTP}) together with (\ref{def_S_alpha}) and (\ref{d_asympt_eq}) implies 
\begin{equation}\label{kvg_first_hitting_time}
\left(\frac{\tau(tc)}{\widetilde{d}(t)}\right)_{c\geq 0} \cid (W_\alpha(c))_{c\geq 0}\,,
\end{equation}
in $D([0,\infty),[0,\infty))$ endowed with $J_1$ topology, where $\tau(c)=T^{\leftarrow}(c)$ (see (\ref{def_renewal_process})). 
For an
$\alpha$--stable process $S_\alpha$ and a fixed $c\geq 0$, the hitting-time  $W_\alpha(c)$ has the Mittag-Leffler distribution (see e.g.~\cite{Anderson87})

If we assume independence between sequences $(X_n)$ and $(Y_n)$, 
the application of Lemma~\ref{basiclemma} becomes
relatively straightforward.
 In this subsection, the definition of point process $N_t$ induced by the sequence $(X_n)$ is changed, since different normalization of the process $(\tau(c))$ is needed in (\ref{kvg_first_hitting_time}). Namely, for $t> 0$ we define
\begin{equation*}
N_t=\sum_{i\geq 1}\delta_{\left(\frac{i}{\widetilde{d}(t)},\widetilde X_{t,i}\right)} \,,
\end{equation*}
 where $\widetilde X_{t,i}$ is defined by
\begin{equation}\label{def_tilde_X_ti}
\widetilde X_{t,i}=\frac{X_i-\widetilde{b}(t)}{\widetilde{a}(t)}\,,
\end{equation}
with $\widetilde{a}(t):=a(\widetilde{d}(t))$, $\widetilde{b}(t):=b(\widetilde{d}(t))$ and $a(t)$, $b(t)$,  $\widetilde{d}(t)$ defined in (\ref{MDA_condition}) and $(\ref{def_d_tilde})$.
The following theorem describes the asymptotic behaviour of all upper order statistics in the sequence of observations $(X_n)$
 separated by regularly varying waiting times of infinite mean and independent of observations $(X_n)$.
\begin{thm}\label{thm_inf_mean_independence}
Suppose that $(X_n)$ and $(Y_n)$ are independent iid sequences such that $X_1\in \mathrm{MDA}(G)$, $Y_1\sim \mathit{RegVar}(\alpha)$ with $0<\alpha<1$. Then, for every $c \geq 0$
\begin{equation}\label{eq:N_t_cvg_inf_mean_independence}
N_t\Big|_{\left[0,\frac{\tau(tc)}{\widetilde{d}(t)}\right]\times\mathbb{E}} \cid  N\Big|_{[0,W_\alpha(c)]\times\mathbb{E}} \,,
\end{equation} 
as $t\toi$, where $N$ is $\mathrm{PRM}(\lambda\times\mu_G)$ independent of the process $(W_\alpha(c))_{c \geq 0}$ distributed as in (\ref{kvg_first_hitting_time}).
\end{thm}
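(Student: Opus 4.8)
The plan is to apply Lemma~\ref{basiclemma} with the point processes $N_t$ as redefined in this section, with $Z_t=\tau(tc)/\widetilde d(t)$, and with the limits $N\sim\mathrm{PRM}(\lambda\times\mu_G)$ and $Z=W_\alpha(c)$. The ingredients needed are: the marginal convergence $N_t\cid N$, the marginal convergence $Z_t\cid W_\alpha(c)$, the joint convergence in the form \eqref{joint_cvg}, and the continuity condition $P(N(\{W_\alpha(c)\}\times\mathbb{E})>0)=0$.

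First I would note that $\widetilde d$ is regularly varying with index $\alpha>0$, hence $\widetilde d(t)\to\infty$ as $t\toi$. Therefore $i/\widetilde d(t)$ is a legitimate choice of the normalization $g$ in \eqref{def_N_t_general}, and since $X_1\in\mathrm{MDA}(G)$ the classical point process convergence (e.g. Proposition 3.21 in \cite{ResEVRVPP}) gives $N_t\cid N$ in $M_p([0,\infty)\times\mathbb{E})$, with $N$ being $\mathrm{PRM}(\lambda\times\mu_G)$. Second, the convergence $Z_t=\tau(tc)/\widetilde d(t)\cid W_\alpha(c)$ is exactly the one-dimensional marginal at the fixed time $c$ of the functional convergence \eqref{kvg_first_hitting_time} established above. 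Third, since by assumption $(X_n)$ and $(Y_n)$ are independent, the processes $N_t$ and $Z_t$ are independent for every $t$, so the joint convergence $(N_t,Z_t)\cid(N,W_\alpha(c))$ holds with $N$ independent of $W_\alpha(c)$; this is precisely the remark following Lemma~\ref{basiclemma}.

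It then remains to check $P(N(\{W_\alpha(c)\}\times\mathbb{E})>0)=0$. Write $\mathbb{E}=\bigcup_{k\geq 1}A_k$ with $A_k$ increasing and $\mu_G(A_k)<\infty$ (for instance $A_k=(-k,\infty]$ in the Gumbel MDA and $A_k=(1/k,\infty]$ in the Fr\'echet MDA). For a fixed level $w\geq 0$ one has $(\lambda\times\mu_G)(\{w\}\times A_k)=0$, so $N(\{w\}\times A_k)=0$ a.s., and letting $k\toi$ gives $N(\{w\}\times\mathbb{E})=0$ a.s. Conditioning on $W_\alpha(c)$ and using its independence from $N$ then yields $P(N(\{W_\alpha(c)\}\times\mathbb{E})>0)=0$. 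An application of Lemma~\ref{basiclemma} now gives \eqref{eq:N_t_cvg_inf_mean_independence}.

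I do not expect a genuine obstacle: the substantive work is already carried out in \eqref{kvg_first_hitting_time} and in Lemma~\ref{basiclemma}. The only points that deserve a little care are the observation that $\widetilde d(t)\to\infty$, so that the redefined $N_t$ still converges to the same PRM, and the decomposition of $\mathbb{E}$ into sets of finite $\mu_G$-mass needed to rule out points of the limiting Poisson measure on the random vertical line $\{W_\alpha(c)\}\times\mathbb{E}$; the independence hypothesis is exactly what makes the joint convergence automatic.
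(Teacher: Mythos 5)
Your proposal is correct and follows essentially the same route as the paper: marginal convergence $N_t\cid N$ via Proposition 3.21 of \cite{ResEVRVPP}, the marginal of \eqref{kvg_first_hitting_time} at the fixed time $c$, joint convergence from independence, and then Lemma~\ref{basiclemma}. The only difference is that you explicitly verify the hypothesis $P(N(\{W_\alpha(c)\}\times\mathbb{E})>0)=0$ (via the exhaustion of $\mathbb{E}$ by sets of finite $\mu_G$-mass and conditioning on $W_\alpha(c)$), a step the paper leaves implicit; this is a correct and worthwhile addition rather than a deviation.
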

\begin{proof}
Since $X_1\in \mathrm{MDA}(G)$ and $\widetilde{d}(t)\nearrow\infty$ as $t\toi$, (\ref{MDA_condition}) is equivalent to $N_t\cid N$ as $t\toi$, where $N$ is $\mathrm{PRM}(\lambda\times \mu_G)$ (see Proposition 3.21 in \cite{ResEVRVPP}). Using (\ref{kvg_first_hitting_time}) and the assumption of independence between processes  $(N_t)$ and $(\tau(t))_{t\geq 0}$, we obtain
\begin{equation}\label{thm_indep_joint_cvg_process_ver}
 \left(N_t,\left(\frac{\tau(tc)}{\widetilde{d}(t)}\right)_{c \geq 0}\right)\cid (N,(W_\alpha(c))_{c \geq 0})\,,
\end{equation}
as $t\toi$. Hence, for fixed $c \geq 0$, we have
\[ \left(N_t,\frac{\tau(tc)}{\widetilde{d}(t)}\right)\cid (N,W_\alpha(c))\,, \]
and, application of Lemma~\ref{basiclemma} yields the desired result.
\end{proof}

Direct consequence of Theorem~\ref{thm_inf_mean_independence} is the asymptotic behaviour of the $k$-th upper order statistics in a sample indexed by the renewal process $(\tau(t))$.
Recall that $M_k^{\tau}(t),\  t\geq 0$ represent the $k$-th upper order statistics in a sample of observations $\{X_1\ldots,X_{\tau(t)}\}$, where $(\tau(t))_{t \geq 0}$ is the renewal process. Since
\[ \left\{\frac{M_k^\tau(t)-\widetilde{b}(t)}{\widetilde{a}(t)}\leq x\right\}=\left\{N_t\left(\Big[0,\frac{\tau(t)}{\widetilde{d}(t)}\Big]\times(x,\infty]\right)\leq k-1\right\}\,, \]
from \eqref{eq:N_t_cvg_inf_mean_independence}
we obtain
\begin{eqnarray*}
P\left(M_k^\tau(t)\leq \widetilde{a}(t)x+\widetilde{b}(t)\right)&\to& P\Bigg(N\Big([0,W_\alpha(1)]\times (x,\infty]\Big)\leq k-1\Bigg)\\
&=&E\left(\frac{\Gamma(k,W_\alpha(1)\mu_G(x,\infty])}{\Gamma(k)}\right)\,, 
\end{eqnarray*}
as $t\toi$, where $\Gamma(k,x)$ is an incomplete gamma function (see \cite{Stegun}). For $k=1$, i.e. for the partial maxima of the first $\tau(t)$ observations, the result first appears in \cite{Berman62} (cf. Theorems 2.1, 2.2 and 2.3 in~\cite{Berman62}).

Using the approach presented in this section we can easily recover the functional limit theorem for a random time changed extremal processes derived in~\cite{MStoev2009}.
For a fixed $t > 0$, we consider the following extremal process
\begin{equation}\label{def_extremal_proc_infinite_mean_indep}
\widetilde{\xi}_t(s)=\frac{M^\tau(ts)-\widetilde{b}(t)}{\widetilde{a}(t)}\,,\quad s>0\,,
\end{equation}
where $\widetilde{a}(t)$ and $\widetilde{b}(t)$ are defined in \eqref{def_tilde_X_ti}. 
Clearly, the process $(W_\alpha(c))_{c>0}$ has nondecreasing sample paths. Thus,
 for  a $G$-extremal process ${(\xi(s))_{s>0}}$ independent of the process $(W_\alpha(c))_{c>0}$, 
a subordinated process
\begin{equation}\label{def_subordinated_G_extrem_proc}
(\widetilde{\xi}_W(s))_{s>0}=(\xi(W_\alpha(s)))_{s>0}\,,
\end{equation}
is well defined and nondecreasing as well.
Once again, an application of $T_1$ functional (see~\eqref{T_1 functional}) to \eqref{thm_indep_joint_cvg_process_ver} and Theorem 4 from~\cite{SilvestrovTeugels98} yields the following result.
\begin{corollary}\label{corollary_infinite_mean}
Under the assumptions in this section, as $t\toi$, we have
\begin{equation*}
(\widetilde{\xi}_t(s))_{s>0}\cid (\widetilde{\xi}_W(s))_{s>0}\,,
\end{equation*}
in $D((0,\infty),\mathbb{R})$ with $J_1$ topology, where $\widetilde{\xi}_t(\cdot)$ and $\widetilde{\xi}_W(\cdot)$ are defined in $(\ref{def_extremal_proc_infinite_mean_indep})$ and $(\ref{def_subordinated_G_extrem_proc})$, respectively.
\end{corollary}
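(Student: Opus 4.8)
The plan is to run the same argument as for Corollary~\ref{corollary_finite_mean}, with the deterministic limiting time change $(c)_{c>0}$ replaced by the random process $(W_\alpha(c))_{c>0}$. The starting point is the joint functional convergence \eqref{thm_indep_joint_cvg_process_ver}: because $(N_t)$ and $(\tau(t))_{t\ge 0}$ are independent, the convergence $N_t\cid N$ (equivalent to \eqref{MDA_condition} by Proposition 3.21 in \cite{ResEVRVPP}) and the functional convergence \eqref{kvg_first_hitting_time} combine to give \eqref{thm_indep_joint_cvg_process_ver} in the product space $M_p([0,\infty)\times\mathbb{E})\times D([0,\infty),[0,\infty))$ carrying the vague topology on the first factor and the $J_1$ topology on the second, with the limit $N$ independent of $(W_\alpha(c))_{c\ge 0}$.

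First I would record that, for every $s>0$,
\[
\widetilde{\xi}_t(s)=T_1(N_t)\!\left(\frac{\tau(ts)}{\widetilde{d}(t)}\right)\,,
\]
since $T_1(N_t)(u)=\max\{\widetilde X_{t,i}:i\le u\,\widetilde d(t)\}$ while $M^\tau(ts)=\max\{X_i:i\le\tau(ts)\}$; thus $\widetilde{\xi}_t$ is the composition of the c\`adl\`ag function $T_1(N_t)$ with the time-change function $c\mapsto\tau(tc)/\widetilde d(t)$. Since $T_1$ is a.s.\ continuous with respect to the law of $N$ (see \cite[p.~214]{ResEVRVPP}), applying the continuous mapping theorem to the map acting by $T_1$ on the first coordinate and by the identity on the second turns \eqref{thm_indep_joint_cvg_process_ver} into
\[
\left(T_1(N_t),\left(\frac{\tau(tc)}{\widetilde d(t)}\right)_{c\ge 0}\right)\cid\left(\xi,(W_\alpha(c))_{c\ge 0}\right)
\]
in $D((0,\infty),\mathbb{R})\times D([0,\infty),[0,\infty))$ with the product of the $J_1$ topologies, where $\xi=T_1(N)$ is a $G$-extremal process that inherits independence from $(W_\alpha(c))_{c\ge 0}$.

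It then remains to pass through the composition (random time change) map $(\eta,\rho)\mapsto\eta\circ\rho$. Here the limiting inner function $\rho=W_\alpha$ is a.s.\ continuous and nondecreasing, being the right-continuous inverse of the strictly increasing process $S_\alpha$, and it satisfies $W_\alpha(c)\uparrow\infty$, so $\xi\circ W_\alpha$ is a well-defined element of $D((0,\infty),\mathbb{R})$ and coincides with $\widetilde{\xi}_W$ from \eqref{def_subordinated_G_extrem_proc}. An application of Theorem 4 in \cite{SilvestrovTeugels98}, which concerns random time changes of extremal processes, then yields $\widetilde{\xi}_t=T_1(N_t)\circ(\tau(t\,\cdot)/\widetilde d(t))\cid\xi\circ W_\alpha=\widetilde{\xi}_W$ in $D((0,\infty),\mathbb{R})$ with the $J_1$ topology, which is the claim.

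The one genuinely delicate point is the (a.s.) continuity of this composition map at the limiting pair $(\xi,W_\alpha)$ in the $J_1$ topology. Note that $c\mapsto\tau(tc)/\widetilde d(t)$ is itself a (rescaled) step function converging to the continuous limit $W_\alpha$, and that $W_\alpha$ is flat exactly on the countably many $c$-intervals produced by the jumps of $S_\alpha$; composition can fail to be continuous only when $\xi$ has a jump at a value that $W_\alpha$ assumes on a non-degenerate interval. This is where the independence of $\xi$ and $W_\alpha$ is used: the jump epochs of the extremal process form a countable random set a.s.\ disjoint from the countable random set of jump epochs of $S_\alpha$. Once this compatibility is in place, the convergence of the composed processes follows from the standard continuity of composition against a continuous nondecreasing inner function, which is precisely what Theorem 4 in \cite{SilvestrovTeugels98} packages; the remaining verifications are routine.
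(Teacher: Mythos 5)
Your proposal is correct and follows essentially the same route as the paper: apply the $T_1$ functional (a.s.\ continuous with respect to the law of $N$) to the first coordinate of the joint convergence \eqref{thm_indep_joint_cvg_process_ver} and then invoke Theorem 4 of \cite{SilvestrovTeugels98} for the random time change by $(\tau(tc)/\widetilde d(t))_{c\ge 0}\cid (W_\alpha(c))_{c\ge 0}$. Your additional discussion of why the composition map behaves well at the limit pair (continuity of $W_\alpha$, independence ensuring the jumps of $\xi$ avoid the flat levels of $W_\alpha$) just spells out the hypotheses that the cited theorem packages, so it is a welcome elaboration rather than a deviation.
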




\section*{Acknowledgements}
We  thank P.Goldstein for careful reading of the manuscript and
many thoughtful suggestions. 
This work has been supported in part by Croatian Science Foundation under projects 3526 and 1356.

  \bibliographystyle{plain} 
  \bibliography{mybib_initials}

\end{document}